\newtheorem{thm}{Theorem}[section]
\newtheorem{lem}[thm]{Lemma}
\newtheorem{prop}[thm]{Proposition}
\newtheorem{ques}[thm]{Question}
  \theoremstyle{definition}
\newtheorem{defi}[thm]{Definition}
\newtheorem{remark}[thm]{Remark}
\newtheorem{ex}[thm]{Example}
\newcommand{\R}{{\mathbb{R}}}
\newcommand{\bz}{{\bf Z}}
\title{Symmetric alcoved polytopes}
\author{Annette Werner\\
\small Institut f\"ur Mathematik\\
\small Goethe-Universit\"at Frankfurt\\
\small Frankfurt-am-Main, Germany\\
\small \tt werner@math.uni-frankfurt.de\\
\and
Josephine Yu \thanks{Supported by NSF grant \#1101289}\\
\small School of Mathematics\\ 
\small Georgia Institute of Technology\\ 
\small Atlanta GA, USA\\
\small \tt jyu@math.gatech.edu
}
\date{\dateline{Aug 18, 2013}{Dec 30, 2013}\\
\small Mathematics Subject Classifications: 52B15, 14T05, 17B22
}
\begin{document}

\maketitle

\begin{abstract}
Generalized alcoved polytopes are polytopes whose facet normals are roots in a given root system.  We call a set of points in an alcoved polytope a generating set if there does not exist a strictly smaller alcoved polytope containing it.  The type $A$ alcoved polytopes are precisely the tropical polytopes that are also convex in the usual sense. In this case the tropical generators form a generating set.  We show that for any root system other than $F_4$, every alcoved polytope invariant under the natural Weyl group action has a generating set of cardinality equal to the Coxeter number of the root system.
\end{abstract}



\section{Introduction}

In this paper we investigate alcoved polytopes which are symmetric under the action of the Weyl group.  For a root system $\Phi$, an alcoved polytope of type $\Phi$ is a polytope defined by inequalities of the form $\langle a , x \rangle \leq c$ where $a \in \Phi$ and $c\in \bz$.  They are unions of (faces of) alcoves in the affine Coxeter arrangement associated to $\Phi$.
Their combinatorics was studied by Lam and Postnikov \cite{lapo1, lapo2}, and Payne \cite{Pa} showed that  alcoved polytopes have a Koszul property.  Alcoved polytopes include important families of polytopes such as hypersimplices and generalized associahedra of Fomin and Zelevinsky \cite{fz}.  We consider them in a more general setting where we also allow non-integral coefficients.  Our main result clearly also holds for the original alcoved polytopes.

For type $A$ root systems, alcoved polytopes are precisely the tropical polytopes that are also convex in the usual sense.  They are named {\em polytropes} by Joswig and Kulas \cite{joku}.  Develin and Sturmfels showed that every tropical polytope has  a natural polyhedral cell decomposition into polytropes \cite{ds}.  It was shown in \cite{jsy} that tropical convexity is closely related to a notion of convexity in affine Bruhat--Tits buildings of type $A$.  In particular, the {\em membranes} in the building can be identified with tropical linear spaces, which are tropically convex.  It is natural to ask about the analogue of tropical convexity for buildings of other classical groups.  Although the right generalization of tropical convexity to other types has not yet been discovered, we believe that the alcoved polytopes and their generating sets will form important ingredients, as polytropes are the building blocks of tropical polytopes in type $A$.

Any $n$-dimensional alcoved polytope of type $A$ is the tropical convex hull of $n+1$ vertices \cite{joku}. We are interested in generating sets for alcoved polytopes associated to arbitrary root systems. Here a set of points in an alcoved polytope $P$ is called a generating set if $P$ is the smallest alcoved polytope containing it.  Our main result states that every alcoved polytope for an irreducible and reduced root system $\Phi$ not of type $F_4$ that is symmetric under the action of the Weyl group  can be generated by $h$ vertices, where $h$ is the Coxeter number of $\Phi$.  We also discuss the case $F_4$, where a symmetric alcoved polytope may need up to $24$ generators while the Coxeter number is $12$.

In the $A_n$-case however, \emph{every} alcoved polytope has a generating set of size $n+1$, which is the Coxeter number, regardless of symmetry.  The symmetry assumption cannot be dropped in general.  We show in Example~\ref{ex-D4} that there exists a non-symmetric alcoved polytope of type $D_4$ requiring eight generators, whereas the Coxeter number of $D_4$ is six. 

In Section~\ref{sec:tropical}, we review definitions of tropical convexity and alcoved polytopes and establish their relationship in the type $A$ case.  In Section~\ref{sec:gensets}, we define generating sets and give a short proof that every alcoved polytope of type $A_n$ has a generating set of $n+1$ elements.  Finally we prove our main result in Section~\ref{sec:symmetric}.

\section{Tropical convexity and alcoved polytopes}
\label{sec:tropical}
We begin by recalling some facts on tropical convexity. 
Let $(\R, \oplus, \odot)$ be the min-plus tropical semiring.
The space $\R^n$ together with componentwise addition $\oplus$ is a semimodule under the semiring $(\R, \oplus, \odot)$, if we put $a \odot (x_1,\ldots, x_n) = (a+ x_1, \ldots, a+ x_n)$ for $a \in \R$ and $(x_1,\ldots, x_n) \in \R^n$. 
\begin{defi}[\cite{ds}]
A subset $S$ of $\R^n$ is called {\em tropically convex} if for all $x, y $ in $S$ and for all $\mu, \nu \in \R$ the element $(\mu\odot x) \oplus (\nu \odot y)$ is also contained in $S$. The {\em tropical convex hull} of a subset $V$ in $\R^n$ is the smallest tropically convex set containing $V$. 
\end{defi}

Note that any tropically convex subset of $\R^n$ containing $x$ also contains $x + (r, \ldots, r) = r \odot x$ for all $r \in \R$. Therefore we project tropically convex subsets of $\R^n$ to $\R^n / \R (1, \ldots, 1)$ and call their image in $\R^n / \R (1, \ldots, 1)$ also tropically convex.

In \cite[Theorem 15]{ds} it is shown that the tropical convex hull of a finite subset of $\R^n / \R(1, \ldots, 1)$  admits a polyhedral cell decomposition.
These bounded cells are at the same time tropically convex and convex in the ordinary sense. Such a set is also called a {\em polytrope}; see \cite{joku}. 

Tropical convexity is closely related to  root systems of type $A$ in the following way. Let $U$ be the subspace $\{(x_1, \ldots, x_n): \sum_i x_i = 0 \}$ of $\R^n$. The quotient map \[ U \rightarrow \R^n / \R (1, \ldots, 1)\] is an isomorphism of real vector spaces. Hence the restriction of the canonical scalar product on $\R^n$ to $U$ induces a scalar product $\langle \,  , \, \rangle$ on  $ \R^n / \R (1, \ldots, 1)$. Consider the elements $a_{ij}= e_i - e_j$ in $\R^n / \R (1, \ldots, 1)$, where $e_i$ denotes the canonical unit vectors.  Then $\Phi = \{a_{ij}: i \neq j\}$ is a root system of type $A_{n-1}$ in $\R^n / \R (1, \ldots, 1)$. 

\begin{lem}
\label{lemma-An}  For any polytrope $P$ in $\R^n/\R(1, \ldots, 1)$ there are non-negative real constants $c_{ij}$ such that 
\[ P = \bigcap_{i \neq j} \{x \in \R^n/\R(1, \ldots, 1): \langle a_{ij}, x\rangle \leq c_{ij}\}.\]
\end{lem}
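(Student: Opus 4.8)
The plan is to reduce everything to a statement about facet normals. Since $a_{ij}=e_i-e_j$ already lies in $U=\{x:\sum_k x_k=0\}$, the induced scalar product gives $\langle a_{ij},x\rangle=x_i-x_j$, so the assertion is that $P=\{x:x_i-x_j\le c_{ij}\ \text{for all }i\ne j\}$ for suitable $c_{ij}\ge 0$. A polytrope is a bounded, classically convex polytope in $\R^n/\R(1,\dots,1)$, so (assuming first that it is full-dimensional, the general case following by intersecting with its affine hull) it is the intersection of the finitely many half-spaces $\{x:\langle n,x\rangle\le b\}$ supporting its facets, each outer normal $n$ taken in $U$. The whole problem then reduces to a single claim: every facet normal $n$ is a positive multiple of some $a_{ij}$. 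Granting this, $P$ has the asserted form with $c_{ij}=\max_{x\in P}(x_i-x_j)$; each such inequality is valid on $P$, the genuine facet constraints are among them, and the pairs $(i,j)$ that are not facet directions merely contribute redundant inequalities.

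To prove the claim I would exploit that tropical convexity is inherited by faces. Let $F$ be a facet with supporting hyperplane $H=\{x:\langle n,x\rangle=b\}$ and outer normal $n\in U$. As a face of a bounded cell of the Develin--Sturmfels complex, $F$ is again a polytrope, in particular tropically convex, and it affinely spans $H$. Hence for any $a,b'\in F$ the whole tropical segment $z(s)=(0\odot a)\oplus(s\odot b')$, whose $k$-th coordinate is $\min(a_k,s+b'_k)$, must stay in $F\subseteq H$, i.e. $\langle n,z(s)\rangle=b$ for all $s$. One always has $\langle n,z(s)\rangle\le\max(\langle n,a\rangle,\langle n,b'\rangle)=b$, and the key step is that this equality cannot hold identically once $n$ has two coordinates of the same sign: if $n_p,n_q>0$, replace $b'$ by $a+t e_p-t' e_q$ with $t n_p=t' n_q$ (so that $b'$ stays in $H$), and then $\langle n,\min(a,b')\rangle=b-n_q t'<b$, so $z(0)\notin H$, contradicting tropical convexity of $F$; if instead $n_p,n_q<0$ the same perturbation gives $\langle n,\min(a,b')\rangle>b$, so $z(0)\notin P$, already contradicting tropical convexity of $P$. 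Therefore $n$ has exactly one positive and one negative entry, and $\sum_k n_k=0$ forces these to be equal in absolute value, so $n$ is a positive multiple of some $e_i-e_j$.

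It remains to arrange $c_{ij}\ge 0$. The observation here is that coordinatewise translation commutes with $\oplus$ and $\odot$, so for any $s\in\R^n$ the translate $P+s$ is again a polytrope, with constants $c_{ij}+s_i-s_j$. Choosing $s$ so that the origin lies in $P+s$ (equivalently, solving the difference system $s_j-s_i\le c_{ij}$, which is feasible because any point of $P$ is already a solution, so the $c_{ij}$ admit no negative cycle) makes every constant equal to $\max_{x\in P+s}(x_i-x_j)\ge 0_i-0_j=0$.

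The step I expect to require the most care is the case analysis in the second paragraph. Mere tropical convexity of $P$ forces each facet normal only to have at most one negative coordinate; indeed the half-space $\{x:x_1+x_2-2x_3\le c\}$, with normal $(1,1,-2)$, is perfectly tropically convex, so one cannot read the root directions off $P$ alone. What makes the argument go through is that a facet of a polytrope is itself tropically convex, which is precisely the extra input supplied by the bounded-cell (finite generation) structure rather than by ordinary and tropical convexity separately; isolating and justifying this inheritance cleanly is the delicate part of the proof.
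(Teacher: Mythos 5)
Your proof takes a genuinely different route from the paper's. The paper disposes of the lemma in one line by citing Develin--Sturmfels (Lemma 10 and Proposition 18 of \cite{ds}), which already state that polytropes are exactly the bounded intersections of half-spaces $x_i-x_j\le c_{ij}$. You instead re-derive the inequality description by showing that every facet normal of a polytrope is proportional to some $e_i-e_j$, and your case analysis is the right one: tropical convexity of $P$ alone only kills a normal with two negative entries (your $(1,1,-2)$ example correctly shows it cannot do more), and the case of two positive entries genuinely needs tropical convexity of the facet itself. You obtain that extra input from the Develin--Sturmfels cell structure (faces of bounded cells are bounded cells, hence tropically convex), so your argument is not fully independent of \cite{ds}; still, it explains \emph{why} the normals must be roots, which the bare citation does not, and the perturbation $b'=a+te_p-t'e_q$ tangent to the facet is a clean way to run both cases.

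Two loose ends. First, the lower-dimensional case is not disposed of merely by ``intersecting with the affine hull'': you must also show that the affine hull of a polytrope is cut out by root hyperplanes, and inside that affine hull the facet normals are only determined modulo its orthogonal complement, so one has to check that a root representative can be chosen. This is doable by essentially the same argument one dimension up, but it needs to be said. Second, your translation trick establishes non-negativity of the constants for $P+s$, not for $P$; in fact all $c_{ij}\ge 0$ forces $0\in P$, so the statement as printed fails already for a one-point polytrope $\{p\}$ with $p\ne 0$. The paper's own proof likewise only produces real constants (and nothing downstream uses non-negativity), so this is a defect of the statement rather than of your argument, but you should not present the translation as repairing it.
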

\begin{proof} By \cite[Lemma 10 and Proposition 18]{ds}, polytropes are precisely the bounded intersections of half-spaces of the form 
$\{ x= (x_1, \ldots, x_n) \in \R^n/ \R (1, \ldots, 1): x_i  - x_j \leq c_{ij}\}$ for $i \neq j$ and $c_{ij} \in \R$, which implies our claim.
\end{proof}

Hence polytropes look like alcoved polytopes of type $A$, as defined in \cite{lapo1,lapo2}. To be precise, 
let $V$ be a real vector space endowed with a scalar product $\langle\, , \, \rangle$, and let $\Phi$ be any irreducible and reduced root system in  $V$. By  $t$ we denote the type of the corresponding Dynkin diagram, i.e.\ $t$ is equal to $A_n$ for $n \geq 1$, $B_n$, $C_n$ for $n \geq 2$, $D_n$ for $n \geq 4$ or to $E_6$, $E_7$, $E_8$, $F_4$, or $G_2$. Moreover, let $W$ be the Weyl group of $V$, i.e.\ $W$ is the finite group of orthogonal endomorphisms of $V$ generated by the reflections at the hyperplanes $H_a = \{x \in V: \langle a,x \rangle = 0\}$ perpendicular to the roots.



\begin{defi} 
A {\em generalized alcoved polytope of type $t$} is a bounded intersection of root halfspaces of the form
\[\bigcap_{a \in \Psi} \{ x \in V: \langle a, x\rangle \leq c_a\},\]
where $\Psi$ is a subset of a root system $\Phi$ of type $t$ and $c_a \in \R$.
\end{defi}

Thus a generalized alcoved polytope is a bounded subset of $V$ defined as an intersection of root halfspaces. Alternatively, we can describe it as a polytope whose set of facet normal vectors is contained in $\Phi$. 
If the right hand side values $c_a$ are all integers, a generalized alcoved polytope we defined is an alcoved polytope from \cite{lapo1,lapo2}.  For simplicity we will refer to ``generalized alcoved polytopes'' simply as ``alcoved polytopes''.

Some examples of alcoved polytopes include associahedra (see \cite{joku} and references therein), cubes, cross polytopes, and the $24$-cell as a degenerate $F_4$ alcoved polytope.



\section{Generating sets of alcoved polytopes}
\label{sec:gensets}

We will define generating sets of alcoved polytopes and prove results about their cardinality for type $A$. Besides, we discuss an interesting example for type $D_4$. 

\begin{defi}\label{defi-generators} Let $\Phi$ be a root system of type $t$ in $V$, and let $P$ be an alcoved polytope of type $t$ in $V$. A subset $\Sigma \subset V$ is called a {\em generating set} of $P$ if $P$ is the smallest alcoved polytope in $V$ containing $\Sigma$. 
\end{defi}

Let $P$ be an alcoved polytope for the root system $\Phi$  of type $A_{n-1}$ as in Lemma~\ref{lemma-An}. Since alcoved polytopes are tropically convex, every subset of $P$ whose tropical convex hull is $P$ is a generating set. However not every set of generators of $P$ in the sense of Definition~\ref{defi-generators} is a set of tropical generators. 
The tropical convex hull of a generating set may be strictly smaller than $P$. For example, consider the square with vertices $(0,0,0), (0,0,1), (0,1,0), (0,1,1)$ in $\R^3/\R(1,1,1)$. The two vertices $(0,0,1)$ and $(0,1,0)$ generate the square as an alcoved polytope, but their tropical convex hall is strictly smaller.  In fact, the square cannot be generated tropically by two points, as the tropical convex hull of any two points is at most one dimensional in $\R^n / \R(1,\dots,1)$.

For any bounded $S \subseteq V$ and $a \in \Phi$ the smallest halfspace of the form $\{x \in V: \langle a, x \rangle \leq c\}$ that contains $S$ is $\{x \in V: \langle a,x \rangle \leq \sup_{s\in S}\langle a,s \rangle \}$, so
 the alcoved polytope generated by $S$ is
$$
\bigcap_{a \in \Phi}\{x \in V : \langle a, x \rangle \leq \sup_{s \in S} \langle a, s \rangle \}.
$$
On the other hand, for an alcoved polytope $P$, a subset $S \subset P$ is a generating set if and only if for every $a \in \Phi$ we have $$\sup_{x \in P} \langle a, x \rangle = \sup_{x \in S} \langle a, x \rangle.$$
This can be rephrased as follows.  Recall that a {\em support hyperplane} $H$ of a polytope $P$ is a hyperplane containing $P$ on one side and having non-empty intersection with $P$.  The intersection $H \cap P$ is the {\em face} of $P$ supported by $H$.

\begin{lem}\label{lem-generators-support}
A set $S \subseteq P$ is a generating set for $P$ if and only if for any $a \in \Phi$ the support hyperplane of $P$ perpendicular to $a$ contains at least one point from $S$.
\end{lem}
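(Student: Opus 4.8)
The plan is to prove the equivalence directly from the characterization that was just established in the paragraph preceding the statement, namely that $S \subseteq P$ is a generating set if and only if $\sup_{x \in P} \langle a, x \rangle = \sup_{x \in S} \langle a, x \rangle$ for every $a \in \Phi$. I would first translate the geometric notion of a support hyperplane perpendicular to a root $a$ into these supremum quantities. Since $P$ is compact (an alcoved polytope is bounded and closed), the supremum $c := \sup_{x \in P} \langle a, x \rangle$ is attained, and the hyperplane $H_a = \{x \in V : \langle a, x \rangle = c\}$ is precisely the support hyperplane of $P$ perpendicular to $a$; it meets $P$ in the face supported by $H_a$. A point $s \in S$ lies on this support hyperplane exactly when $\langle a, s \rangle = c$.

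Next I would argue the equivalence in both directions. For the forward direction, suppose $S$ generates $P$. Then for each $a \in \Phi$ we have $\sup_{x \in S} \langle a, x \rangle = c$; since $S \subseteq P$ is compact (or at least since each $\langle a, \cdot \rangle$ is bounded on $S$ by $c$), this supremum is attained at some $s \in S$ with $\langle a, s \rangle = c$, so $s$ lies on the support hyperplane $H_a$. Here one should be slightly careful if $S$ is not assumed compact: the supremum over $S$ equals $c$ and is bounded above by $c$ on all of $S$, so the condition forces some point of $S$ to actually realize the value $c$ provided the supremum is attained. Conversely, if for every $a \in \Phi$ the support hyperplane $H_a$ contains a point $s_a \in S$, then $\langle a, s_a \rangle = c = \sup_{x \in P} \langle a, x \rangle$, while $\langle a, x \rangle \leq c$ for all $x \in S \subseteq P$; hence $\sup_{x \in S} \langle a, x \rangle = c = \sup_{x \in P} \langle a, x \rangle$, and the generating-set criterion is satisfied.

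The main obstacle, and the only genuinely delicate point, is the attainment of the supremum over $S$ in the forward direction. If $S$ is merely required to be a subset of $P$ without compactness, then $\sup_{x \in S} \langle a, x \rangle = c$ does not immediately guarantee that the value $c$ is achieved by a point of $S$, so the support hyperplane might fail to contain a point of $S$ even though the suprema agree. I would resolve this either by noting that the phrase ``contains at least one point from $S$'' should be read together with the generating condition, which for the relevant applications (finite generating sets) makes attainment automatic, or by observing that for the equivalence to hold as stated one uses that $S$ can be taken finite or closed; in the finite case the supremum is a maximum and attainment is immediate. With this understood, the two implications above combine to give the stated equivalence, and the proof is complete.
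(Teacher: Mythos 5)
Your proof is correct and follows the same route the paper intends: the lemma is stated there without proof, as an immediate rephrasing of the supremum criterion $\sup_{x \in P} \langle a, x\rangle = \sup_{x \in S} \langle a, x\rangle$ established in the preceding paragraph, which is exactly the translation you carry out. Your caveat about attainment of the supremum over a non-closed $S$ is a genuine (if minor) point that the paper silently ignores; it is harmless in context because the lemma is only ever applied to finite sets of vertices, where the supremum is a maximum.
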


It follows that there is a minimal cardinality generating set consisting of vertices of $P$ only.  If a point in a generating set lies in the relative interior of a non-vertex face, then the point can be replaced by a vertex of that face, as doing so only enlarges the set of support hyperplanes containing the point.

\begin{ques}
Our problem can be phrased as follows.
\begin{enumerate}
\item For each root system $\Phi$, find a number $k$ such that every alcoved polytope of type $\Phi$ has a generating set of size at most $k$.
\item More generally, let $P$ be a polytope and $\mathcal{H}$ be a set of support hyperplanes of $P$ such that all facets of $P$ are contained in $\mathcal{H}$. The containment may be strict.  Find the minimal cardinality of a set $S \subset P$ such that  every support hyperplane in $\mathcal{H}$ contains at least one point from $S$.
\end{enumerate}
\end{ques}

There is a nice solution for type $A$ alcoved polytopes, as we will see in Proposition~\ref{prop-An-generators} below.
The general problem seems computationally difficult, as it amounts to solving a set covering problem, which is known to be NP-complete in general.  

\begin{remark}\label{rem-lowerbound} For every root system $\Phi$ there exist alcoved polytopes which cannot be generated with less than $ |\Phi| / \dim (V)$ vertices. For a fixed set of facet normals, the condition on the coefficients for the perpendicular hyperplanes to meet at a point is described by vanishing of certain determinants.  Generic choices of coefficients would avoid this determinantal locus and give polytopes that are simple, i.e.\ no more than $\dim(V)$ facets can meet at one point. Therefore in this case we need at least $|\Phi|/\dim(V)$ vertices to support all facets.
\end{remark}

\begin{prop} \label{prop-dim2} Let $t$ be  the type of a root system $\Phi$ in a vector space of dimension $2$. Then every alcoved polytope of type $t$ can be generated by $|\Phi| / 2$ many vertices. 
\end{prop}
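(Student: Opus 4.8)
The plan is to rephrase the generating-set condition of Lemma~\ref{lem-generators-support} as a covering problem on the circle of directions and then settle it with a vertex-cover count; the argument will be uniform, so no case distinction between $A_2$, $B_2$ and $G_2$ is needed. Since $\dim V = 2$, the polytope $P$ is a convex polygon, say with $m$ vertices $v_1,\dots,v_m$ and $m$ edges taken in cyclic order. For a root $a$ the support hyperplane perpendicular to $a$ meets $P$ either in an edge or in a single vertex, so by the remark following Lemma~\ref{lem-generators-support} I may search for a generating set among the vertices of $P$. To each vertex $v_i$ I attach its normal cone, the angular sector bounded by the outer normals of its two incident edges; then $v_i$ lies on the support hyperplane perpendicular to $a$ exactly when $a$ lies in this cone. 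The goal becomes to select at most $|\Phi|/2$ vertices whose normal cones together contain all of $\Phi$.

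First I would record the combinatorial picture. The $m$ normal cones tile the circle, and two consecutive cones meet exactly along one ray, the common edge normal, which is itself a root. This partitions $\Phi$ into $m$ \emph{boundary roots} (the edge normals, one per edge) and $|\Phi| - m$ \emph{interior roots} (those lying strictly inside some cone). Covering the boundary root that is the normal of the edge $v_iv_{i+1}$ means selecting $v_i$ or $v_{i+1}$, whereas covering an interior root lying in the cone of $v_i$ forces the choice of $v_i$.

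Next I would set this up on the cycle graph whose nodes are $v_1,\dots,v_m$ and whose edges join consecutive vertices, one edge per boundary root. Covering all boundary roots is precisely choosing a vertex cover of this cycle, and covering the interior roots requires the cover to contain the set $F$ of \emph{forced} vertices, those whose open normal cone contains an interior root. Because the open cones are pairwise disjoint, distinct interior roots sit in distinct cones, whence $|F| \le |\Phi| - m$. I would then produce a generating set by taking $F$ together with a minimum vertex cover of the paths into which the forced vertices cut the cycle: a path on $\ell$ nodes is covered by $\lfloor \ell/2\rfloor$ of its nodes, so the vertices chosen outside $F$ number at most $(m - |F|)/2$.

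The crux is the resulting count: the set so built has size at most $|F| + (m-|F|)/2 = (m + |F|)/2 \le (m + (|\Phi|-m))/2 = |\Phi|/2$, the one nontrivial ingredient being the inequality $|F| \le |\Phi| - m$. I expect the main obstacle to be the bookkeeping around the interior roots — checking that each forces exactly one vertex and that the degenerate case $F = \varnothing$, in which $m = |\Phi|$ and the cycle $C_m$ already has a vertex cover of size $m/2$, fits the same bound. Once this is in place the proposition follows, and the extremal instance $m = |\Phi|$ (the full $|\Phi|$-gon) shows the count $|\Phi|/2$ cannot be improved, matching Remark~\ref{rem-lowerbound}.
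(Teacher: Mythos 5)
Your argument is correct, but it is organized quite differently from the paper's. The paper's proof is a two-line greedy pairing: cyclically order the $|\Phi|$ roots by direction, observe that any two \emph{consecutive} roots lie in the closed normal cone of a common vertex (since the open arc between them contains no root, hence no edge normal), and pick one such vertex for each of the $|\Phi|/2$ consecutive pairs. You instead split the roots into edge normals and cone-interior roots, translate the problem into a vertex cover problem on the cycle $C_m$ with a forced set $F$, and close with the count $|F| + (m-|F|)/2 \le |\Phi|/2$; the one imprecision is the phrase ``distinct interior roots sit in distinct cones'' (two interior roots can share a cone --- what you actually need, and what does hold, is an injection from $F$ into the set of interior roots, giving $|F|\le|\Phi|-m$). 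Your route is longer but buys more: it makes explicit the set-cover structure alluded to in the paper's Question, identifies exactly which vertices are forced, and exhibits the extremal case $m=|\Phi|$ matching Remark~\ref{rem-lowerbound}, whereas the paper's pairing argument gets the bound with essentially no bookkeeping. Both rest on the same geometric fact that the closed normal cones of the vertices tile the circle of directions with root rays as the only possible boundaries.
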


\begin{proof}
Let us cyclically order the roots using their positions in the plane.  For any two support hyperplanes corresponding to two consecutive roots, there is a vertex supporting them, so $|\Phi|/2$ vertices are sufficient to support all root hyperplanes.
\end{proof}


In \cite[Theorem 12]{joku}, it is shown that every alcoved polytope of type $A_{n}$  is a tropical simplex, i.e.\ it has $d+1$ tropical generators, where $d$ is the dimension of the alcoved polytope (which might be smaller than $n$ if the polytope lies in a root hyperplane).
In particular it can be generated by $n+1$ elements in the sense of Definition~\ref{defi-generators}. 
The next proposition provides an easy proof of this fact.  This does not imply the Joswig--Kulas result because the set of tropical generators may be strictly larger.
\begin{prop}\label{prop-An-generators}
Every alcoved polytope of type $A_n$ is generated by $n+1$ elements.
\end{prop}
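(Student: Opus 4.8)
The plan is to produce an explicit generating set of size $n+1$ and to verify it against the criterion of Lemma~\ref{lem-generators-support}: it suffices to exhibit $n+1$ points of $P$ so that every root hyperplane supporting $P$ passes through at least one of them. I would realize $\Phi$ of type $A_n$ in $\R^{n+1}/\R(1,\ldots,1)$ with roots $a_{ij}=e_i-e_j$, so that by Lemma~\ref{lemma-An} we may write $P=\{x : x_i-x_j\le c_{ij}\text{ for all }i\ne j\}$. For each ordered pair I set $M_{ij}=\sup_{x\in P}(x_i-x_j)$; these are finite since $P$ is bounded, we have $M_{ii}=0$, and tightening the constraints gives $P=\{x : x_i-x_j\le M_{ij}\}$, so that the support hyperplane of $P$ perpendicular to $a_{ij}$ is exactly $\{x : x_i-x_j=M_{ij}\}$.

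The one structural fact I would record first is the triangle inequality $M_{ik}\le M_{ij}+M_{jk}$, which is immediate from $x_i-x_k=(x_i-x_j)+(x_j-x_k)$ for $x\in P$. Then, for each index $k\in\{1,\dots,n+1\}$, I would define the point $v^{(k)}$ by $v^{(k)}_i=M_{ik}$, and check the two required properties. For membership $v^{(k)}\in P$, the inequality $v^{(k)}_i-v^{(k)}_j=M_{ik}-M_{jk}\le M_{ij}$ is precisely the triangle inequality, so $v^{(k)}$ satisfies all defining inequalities of $P$. For the covering property, I fix a pair $(i,j)$ and evaluate at $k=j$: since $M_{jj}=0$ we get $v^{(j)}_i-v^{(j)}_j=M_{ij}$, so $v^{(j)}$ lies on the support hyperplane perpendicular to $a_{ij}$. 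Hence every root hyperplane supporting $P$ contains one of the $v^{(k)}$, and Lemma~\ref{lem-generators-support} shows that $\{v^{(1)},\dots,v^{(n+1)}\}$ is a generating set of the required size.

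This construction is exactly the column-of-distances (``Kleene star'') description of the tropical vertices, so there is essentially no obstacle once one guesses the right points; the hard part is really just identifying them. The only items needing care are routine compactness facts: that a support hyperplane exists and equals $\{x : x_i-x_j=M_{ij}\}$ even for roots $a_{ij}$ that do not span a facet, and that replacing the $c_{ij}$ by the tightened values $M_{ij}$ does not change $P$. I would also stress that the $v^{(k)}$ form a set of support points rather than tropical generators, which is why the argument does not recover the stronger Joswig--Kulas statement that $P$ is a tropical simplex.
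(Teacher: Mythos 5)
Your proof is correct and is essentially the paper's own argument: your tightened constants $M_{ij}$ are the paper's $c_{ij}$ after its ``all inequalities are tight'' normalization, your points $v^{(k)}$ with $v^{(k)}_i = M_{ik}$ are exactly the paper's $p^{(k)}$, and the triangle inequality $M_{ik}\le M_{ij}+M_{jk}$ plays the same role in verifying membership (your use of it with $i=k$ even absorbs the case the paper checks separately via a point of $P$). No substantive difference.
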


\begin{proof}
Let $P$ be a nonempty alcoved polytope of type $A_n$ defined by $x_i - x_j \leq c_{ij}$ for some $c_{ij} \in \R$ where $0 \leq i,j \leq n$ and $i \neq j$.  By Lemma~\ref{lem-generators-support}, we need to consider the support hyperplanes of $P$ normal to the roots, so we may assume that all inequalities are tight, i.e.\ for each pair $(i,j)$, there is a point $p \in P$ such that $p_i - p_j = c_{ij}$.  Then we must have $c_{ik} - c_{jk} \leq c_{ij}$ for any distinct $i,j,k$. Otherwise, for any $p \in P$, $p_i - p_k = (p_i - p_j) + (p_j - p_k) \leq c_{ij} + c_{jk} < c_{ik}$, contradicting our assumption that the inequality $x_i - x_k \leq c_{ik}$ is tight for $P$.

For each $0 \leq k \leq n$, let $p^{(k)} = (c_{0k}, \dots, c_{k-1,k}, 0, c_{k+1,k}, \dots, c_{nk}) \in \R^{n+1} / \R (1, \ldots, 1)$.  For any $l \neq k$, the point $p^{(k)}$ lies on the hyperplanes $x_\ell - x_k = c_{\ell k}$.  Since $P$ is non empty, let $p \in P$ and we get $-c_{ \ell k} \leq p_k - p_\ell \leq c_{k \ell }$.
Hence $p^{(k)}_k - p^{(k)}_\ell = - c_{\ell k} \leq c_{k \ell}$. 
Moreover, for any pair $i \neq j$ distinct from $k$, we have $p^{(k)}_i - p^{(k)}_j = c_{ik} - c_{jk} \leq c_{ij}$ using the inequality seen above.  Therefore the point $p^{(k)}$ lies in the polytope $P$.  The $n+1$ points $p^{(0)}, p^{(1)}, \dots, p^{(n)}$ all lie in $P$ and support all the root hyperplanes, so they form a generating set of size $n+1$.
\end{proof}

Note that $n+1$ is the Coxeter number of the root system of type $A_n$. We will see below in Theorem~\ref{theorem-generators} that almost all Weyl group symmetric alcoved polytopes can be generated by a set of vertices whose cardinality is the Coxeter number of the root system.

However, for types of root systems other than $A_n$ this is no longer true if we drop the symmetry hypothesis, as seen in the following example.



\begin{ex}
\label{ex-D4}
Consider the $D_4$ alcoved polytope in $\R^4$ defined by the inequalities given in the columns of the following matrix in homogeneous coordinates.  For example, the first column represents the inequality $100 + x_1 + x_2 \geq 0$.
$$\left(\begin{smallmatrix}{100}&
       {97}&
       {96}&
       {95}&
       {96}&
       {98}&
       {95}&
       {98}&
       {96}&
       {98}&
       {96}&
       {96}&
       {98}&
       {98}&
       {99}&
       {99}&
       {95}&
       {96}&
       {95}&
       {96}&
       {95}&
       {99}&
       {95}&
       {100}\\
       1&
       {-1}&
       1&
       {-1}&
       1&
       {-1}&
       1&
       {-1}&
       0&
       0&
       0&
       0&
       1&
       {-1}&
       1&
       {-1}&
       0&
       0&
       0&
       0&
       0&
       0&
       0&
       0\\
       1&
       {-1}&
       {-1}&
       1&
       0&
       0&
       0&
       0&
       1&
       {-1}&
       1&
       {-1}&
       0&
       0&
       0&
       0&
       1&
       {-1}&
       1&
       {-1}&
       0&
       0&
       0&
       0\\
       0&
       0&
       0&
       0&
       1&
       {-1}&
       {-1}&
       1&
       1&
       {-1}&
       {-1}&
       1&
       0&
       0&
       0&
       0&
       0&
       0&
       0&
       0&
       1&
       {-1}&
       1&
       {-1}\\
       0&
       0&
       0&
       0&
       0&
       0&
       0&
       0&
       0&
       0&
       0&
       0&
       1&
       {-1}&
       {-1}&
       1&
       1&
       {-1}&
       {-1}&
       1&
       1&
       {-1}&
       {-1}&
       1\\
       \end{smallmatrix}\right)$$
A computer check shows that this alcoved polytope requires eight generators, while the Coxeter number of $D_4$ is six.  The polytope is simple and has $f$-vector $(96, 192, 120, 24, 1)$.

The number of generators is equal to the cardinality of the smallest
subset of vertices that meets all the facets. We computed it as
follows. Using the software polymake \cite{polymake} compute the set of vertices in every facet.  With the software Macaulay2, we create a polynomial ring with variables indexed by the vertices.  To every facet, we associate a monomial that is the product of variables corresponding to the vertices in the facet.  These monomials generate an ideal whose codimension is the quantity we want to compute.
\end{ex}

\section{Symmetric alcoved polytopes}
\label{sec:symmetric}
In this section we define symmetric alcoved polytopes and prove our main result stating that in all cases except $F_4$ they can be generated by a set of cardinality equal to the Coxeter number. Afterwards we discuss the case of $F_4$-alcoved polytopes.

Let $\Phi$ be a root system contained in the Euclidean vector space $(V, \langle \, , \, \rangle)$ and let $W$ be the associated Weyl group.

\begin{defi}
We call an alcoved polytope in $V$ of type $t$ {\em symmetric} if it is invariant under the action of the Weyl group $W$.
\end{defi}

Note that $W$ acts on the faces of a symmetric alcoved polytope. Since every $w$ in $W$ maps the support hyperplane with respect to the root $a$ to the support hyperplane with respect to the root $w(a)$, we can write a symmetric alcoved polytope as
\[P = \bigcap_{a \in \Phi}\{ x \in V: \langle a, x\rangle \leq c_a\},\]
with $c_a = c_{w(a)}$ for all $w$ in $W$ and for all $a \in \Phi$.

Note that an alcoved polytope for a non-reduced root system $\Phi$ is also an alcoved polytope of the associated reduced root system $\Phi^{red}$. Moreover, alcoved polytopes for reducible root systems are products of alcoved polytopes for the irreducible components. Therefore we will from now on assume that $\Phi$ is irreducible and reduced. 

Let us recall some basic facts about root systems from \cite[Chapter VI]{bou}.  For every root $a$ we denote by $s_a$ the corresponding reflection in $V$. Then for all roots $b$ we have
\[s_a(b) = a - n(a,b) b, \quad \mbox{ where }n(a,b) = 2 \frac{\langle a,b\rangle}{\langle b,b\rangle}\]
is a number between $-3$ and $3$ in the reduced case. 

Fix a chamber with respect to the hyperplane arrangement in $V$ given by the hyperplanes $H_a =\{ x \in V: \langle a, x \rangle = 0 \}$. Let $b_1, \ldots, b_n$ be the roots perpendicular to the facets of the fixed chamber which lie on the same side as the chamber.  They form a basis of $\Phi$ \cite[VI, 1.5, Theorem 2]{bou}.  Then the element
\[ \omega = s_{b_1} \circ \ldots \circ s_{b_n}\]
of the Weyl group is called a {\em Coxeter element}. It depends up to conjugation on the choice of the basis and on the choice of its ordering \cite[V, 6.1, Proposition 1]{bou}. The order $h$ of $\omega$ in the Weyl group is called the {\em Coxeter number} of $\Phi$. By \cite[V, 6.2, Theorem 1]{bou},  we have
\[n h = |\Phi|\]
where $|\Phi|$ is the number of roots. 

After choosing a basis $b_1, \ldots, b_n$ we also write $s_i = s_{b_i}$.  Let $\Gamma$ be the cyclic subgroup of $W$ generated by $\omega = s_1 \circ \ldots \circ s_n$. 
For all $i = 1, \ldots, n$ let
\[\theta_i = s_n \circ s_{n-1} \circ \ldots \circ s_{i+1} (b_i).\]

\begin{prop}\cite[VI, 1.11, Proposition 33]{bou}
With the notation above, the root system $\Phi$ is a disjoint union of the orbits $\Gamma \theta_i$ for $i = 1, \ldots n$.
\end{prop}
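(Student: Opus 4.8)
The plan is to combine the identity $nh = |\Phi|$ recalled above with a counting argument that pins down both the number and the sizes of the $\Gamma$-orbits on $\Phi$. Write $\Phi^+$ for the set of positive roots relative to the basis $b_1,\dots,b_n$, and call a root $\gamma$ a \emph{descent root} if $\gamma \in \Phi^+$ while $\omega\gamma \in -\Phi^+$. First I would identify the $\theta_i$ with the descent roots. Since $\omega^{-1} = s_n \circ \cdots \circ s_1$ is a product of the $n$ distinct simple reflections, it is a reduced expression of length $n$, and the standard inversion-set description of a reduced word says that the roots $s_n\circ\cdots\circ s_{i+1}(b_i) = \theta_i$ for $i = 1,\dots,n$ are exactly the positive roots sent to negative roots by $\omega$. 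Hence the $\theta_i$ are $n$ distinct descent roots, and there are no others.

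The heart of the matter is to show that there are exactly $n$ orbits and that each contains exactly one descent root; the key lemma is that no $\Gamma$-orbit lies entirely in $\Phi^+$ (equivalently, none lies entirely in $-\Phi^+$). Suppose an orbit $O$ were contained in $\Phi^+$. Its sum $v = \sum_{\gamma \in O}\gamma$ is fixed by $\omega$ (which merely permutes $O$), and it is nonzero because $\langle v,\xi\rangle = \sum_{\gamma \in O}\langle\gamma,\xi\rangle > 0$ for any $\xi$ in the interior of the fixed chamber. But a Coxeter element fixes no nonzero vector: for a hypothetical fixed $x$, set $y_0 = x$ and $y_k = s_{n-k+1}(y_{k-1})$, so that $y_n = \omega x = x$. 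The differences $y_k - y_{k-1}$ are scalar multiples of the distinct basis vectors $b_{n-k+1}$ and telescope to $y_n - y_0 = 0$, so linear independence forces every coefficient to vanish; unwinding this gives $\langle x, b_i\rangle = 0$ for all $i$ and hence $x = 0$. This contradiction proves the lemma.

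Now I would count descents. For an orbit $O$ of size $d$, the signs of $\gamma, \omega\gamma, \dots, \omega^{d-1}\gamma$ form a cyclic sequence which, by the lemma, takes both signs; such a sequence has at least one transition from $\Phi^+$ to $-\Phi^+$, so $O$ contains at least one descent root. Since the total number of descent roots is $n$, the number $m$ of orbits satisfies $m \le n$. On the other hand the orbits partition $\Phi$ and each has size dividing $h = |\Gamma|$, so $nh = |\Phi| = \sum_j d_j \le mh$, giving $m \ge n$. Therefore $m = n$, every orbit has size exactly $h$, and every orbit contains exactly one descent root.

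Finally, the $n$ descent roots $\theta_1,\dots,\theta_n$ are distinct and each lies in a distinct orbit (two of them in the same orbit would violate the ``exactly one descent root per orbit'' count). Hence $\Gamma\theta_1,\dots,\Gamma\theta_n$ are the $n$ distinct orbits, they are pairwise disjoint, and $\Phi = \bigsqcup_{i=1}^n \Gamma\theta_i$. I expect the only real obstacle to be the key lemma that no orbit is entirely positive; the cleanest route is the fixed-point-freeness of $\omega$ proved by the elementary telescoping above, which avoids appealing to the eigenvalues or exponents of the Coxeter element.
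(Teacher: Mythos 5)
Your proof is correct. Note that the paper itself gives no argument for this statement --- it is quoted verbatim from Bourbaki [VI, 1.11, Proposition 33] --- so there is no in-paper proof to diverge from; what you have written is essentially the standard argument (the one found in Bourbaki and in Humphreys' treatment of Coxeter elements). All the steps check out: the identification of $\theta_1,\dots,\theta_n$ with the inversions of $\omega$ is the usual reduced-word description (and in the notation of the paper, with $\omega = s_1\circ\cdots\circ s_n$, these are indeed the positive roots sent to negative by $\omega$, as one can confirm already in $A_2$); the telescoping argument that $\omega$ fixes no nonzero vector is valid because $b_1,\dots,b_n$ is a vector-space basis of $V$; and the double counting using $|\Phi|=nh$ (which the paper recalls from Bourbaki V, 6.2 and which is proved there independently of Proposition 33, so there is no circularity) correctly forces exactly $n$ orbits, each of size $h$ and each containing exactly one $\theta_i$. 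The only cosmetic wrinkle is your appeal to the reduced word for $\omega^{-1}$ when the inversion-set formula you actually use is the one attached to the reduced word $s_1\cdots s_n$ of $\omega$ itself; since $\ell(\omega)=\ell(\omega^{-1})$ this affects nothing, and in fact your own fixed-point lemma already yields $\ell(\omega)\ge n$, hence reducedness, if you wanted to avoid citing that fact separately.
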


Our main result is as follows.
\begin{thm}\label{theorem-generators} Let $P$ be a symmetric alcoved polytope of irreducible and reduced type $t \neq F_4$. Then $P$ can be generated by $h$ vertices, where $h$ is the Coxeter number of $t$.
\end{thm}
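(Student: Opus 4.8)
The plan is to realise the generating set as a single orbit of the cyclic group $\Gamma=\langle\omega\rangle$ generated by the Coxeter element. By Lemma~\ref{lem-generators-support} it suffices to find at most $h$ points of $P$ meeting, for every root $a\in\Phi$, the support hyperplane of $P$ perpendicular to $a$. Since $P$ is symmetric, each $w\in W$ maps $P$ to itself and sends the support hyperplane of $a$ to that of $w(a)$; hence if a vertex $v$ lies on the support hyperplanes indexed by a set $R\subseteq\Phi$ of roots, then $\omega^{k}v$ lies on those indexed by $\omega^{k}R$, and the orbit $\Gamma v$ covers the support hyperplanes indexed by $\bigcup_{k}\omega^{k}R$. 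By the preceding proposition $\Phi=\bigsqcup_{i=1}^{n}\Gamma\theta_{i}$, so it is enough to produce a single vertex $v\in P$ whose supporting roots contain the transversal $\theta_{1},\dots,\theta_{n}$; the orbit $\Gamma v$ then meets every support hyperplane and has at most $h$ elements, all of them vertices because they are $W$-images of $v$.

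First I would pin down $v$. The roots $\theta_{1},\dots,\theta_{n}$ are linearly independent: from $\theta_{i}=s_{n}\circ\cdots\circ s_{i+1}(b_{i})$ and $s_{j}(b)=b-n(b,b_{j})b_{j}$, each reflection $s_{j}$ with $j>i$ only adds a multiple of $b_{j}$, so that $\theta_{i}\in b_{i}+\mathrm{span}(b_{i+1},\dots,b_{n})$ with $b_{i}$-coefficient equal to $1$; the change-of-basis matrix from $\theta_{1},\dots,\theta_{n}$ to $b_{1},\dots,b_{n}$ is therefore unitriangular. Consequently the $n$ hyperplanes $\langle\theta_{i},x\rangle=c_{\theta_{i}}$ meet in a unique point $v$, and $v$ is automatically a vertex of $P$ once we know $v\in P$, since it then satisfies $n$ linearly independent defining inequalities with equality. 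The whole theorem thus reduces to the feasibility statement $v\in P$.

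The remaining, and main, difficulty is exactly this feasibility. Because $c_{a}$ is constant on $W$-orbits, every $a\in\Gamma\theta_{j}$ satisfies $c_{a}=c_{\theta_{j}}=\langle\theta_{j},v\rangle$, and writing $a=\omega^{k}\theta_{j}$ gives $\langle a,v\rangle=\langle\theta_{j},\omega^{-k}v\rangle$; hence $v\in P$ is equivalent to requiring that, for each $j$, the representative $\theta_{j}$ maximises the functional $\langle\,\cdot\,,v\rangle$ over its orbit $\Gamma\theta_{j}$. I would attack this by telescoping $v-\omega^{m}v=\sum_{k=0}^{m-1}\omega^{k}(1-\omega)v$ and using the inequalities among the $c_{a}$ forced by assuming every support hyperplane tight — the higher-rank analogues of the relations $c_{ik}\le c_{ij}+c_{jk}$ used in the proof of Proposition~\ref{prop-An-generators} — combined with the fact that $\omega$ acts on each orbit by a rotation whose image in the Coxeter plane is a regular $h$-gon; where convenient one can cut down to the rank-two subsystems handled by Proposition~\ref{prop-dim2}. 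This is precisely the step at which the type of $\Phi$ intervenes: the orbit-maximality of $\theta_{j}$ should hold for every irreducible reduced type except $F_{4}$, where, as the $24$-cell already suggests, no single orbit of size $h$ can support all facets and up to $24$ generators are genuinely needed. Granting feasibility, $\Gamma v$ is a generating set of at most $h$ vertices, completing the proof for $t\neq F_{4}$.
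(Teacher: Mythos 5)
Your overall architecture matches the paper's: reduce via Lemma~\ref{lem-generators-support} to covering all support hyperplanes, invoke the Bourbaki decomposition $\Phi=\bigsqcup_i\Gamma\theta_i$, and take as generating set the $\Gamma$-orbit of one well-chosen vertex. But the step you defer --- ``granting feasibility'' --- is where essentially all of the content of the theorem lives, and your sketch of how to establish it (telescoping $v-\omega^m v$, triangle-type inequalities among the $c_a$, rotations in the Coxeter plane) is not carried out and is not how the difficulty is actually resolved. The paper proves feasibility by explicit construction, exploiting the fact that symmetry leaves only one constant $\lambda$ in the simply laced case (by transitivity of $W$ on roots) and two constants $\lambda,\mu$ in types $B_n,C_n$: for simply laced types other than $E_8$ it numbers the basis so that $b_n$ is a node whose coefficient in the highest root equals $1$, shows $\theta_i=\sum_{b_\nu\in[b_i,b_n]}b_\nu$, and takes $x$ defined by $\langle b_n,x\rangle=\lambda$ and $\langle b_i,x\rangle=0$ for $i<n$; then $\langle a,x\rangle\in\{0,\lambda\}$ for every positive root $a$ because the $b_n$-coefficient of $a$ is $0$ or $1$, which gives $x\in P_\lambda$ at a stroke. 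In type $B_n$ one computes $\theta_n=a_n$, $\theta_i=a_n+a_i$ and writes down an analogous explicit point from $\lambda$ and $\mu$. None of this is recovered by your proposed inequalities.

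More seriously, the specific point you pin down --- the unique $v$ with $\langle\theta_i,v\rangle=c_{\theta_i}$ for all $i$ --- is the wrong point in type $E_8$. There the highest root has no coefficient equal to $1$, the argument above breaks, and the paper instead verifies computationally that $x=\tfrac12(b_2+b_3)$ is a vertex lying on fourteen root hyperplanes of the form $\omega^{k}\theta_i$ with at least one from each of the eight orbits; this $x$ does \emph{not} lie on the hyperplanes of the transversal $\theta_1,\dots,\theta_8$ itself. Your insistence that the single vertex support the particular representatives $\theta_1,\dots,\theta_n$ is an unnecessary strengthening of what the orbit argument needs, and it fails in $E_8$. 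You should weaken the requirement to ``$v$ lies on at least one support hyperplane from each $\Gamma$-orbit'' and then supply, type by type, an explicit vertex --- which is exactly the case analysis your proposal leaves open.
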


\begin{proof}
If  $\dim V = 2$, our claim follows from Lemma~\ref{prop-dim2}. The simply laced case is proved below in Theorem~\ref{theorem-simplylaced}. The case
$B_n$ and $C_n$ will be shown in Theorem~\ref{theorem-bc}.
\end{proof}

Symmetric alcoved polytopes of type $F_4$ may require up to $24$ generators while the Coxeter number is $12$; see Theorem~\ref{theorem-f4} for a detailed analysis.

Note that by Remark~\ref{rem-lowerbound} the bound from Theorem~\ref{theorem-generators} is the candidate for the best possible result for general (not necessarily symmetric) alcoved polytopes, since  $h=| \Phi | / \dim (V)$.  This works in the $A_n$ case, as explained above. However, in the case of the root system $D_4$ there are exceptions of non-symmetric alcoved polytopes where more generators are needed; see Example~\ref{ex-D4}. It is an interesting question for which types of root system the bound from Theorem~\ref{theorem-generators} can be generalized to the non-symmetric case. 

\subsection{Simply laced case: types $A$, $D$, and $E$.}

As indicated before, we will prove Theorem~\ref{theorem-generators} in several steps. We begin with the simply laced case.
Recall that a root system $\Phi$ is called simply laced if all roots have the same length, i.e.\ if the number $\langle a,a \rangle$ is constant for all $a \in \Phi$. 
The types of the simply laced root systems are precisely the types $A,D$ and $E$. 

\begin{thm}\label{theorem-simplylaced} Let $P$ be a symmetric alcoved polytope of type $t$, where $t$ is irreducible, reduced and simply laced, i.e.\ either $A_n$ for $n \geq 1$, $D_n$ for $n \geq 4$ or $E_6$, $E_7$ or $E_8$. Then $P$ can be generated by $h$ vertices.
\end{thm}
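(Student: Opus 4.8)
The plan is to exploit the fact that in the simply laced case the Weyl group acts transitively on $\Phi$, so symmetry collapses all the constants $c_a$ to a single value. First I would record that, since all roots have equal length and $\Phi$ is irreducible, $W$ permutes all of $\Phi$ in one orbit, whence $c_a = c$ for every $a \in \Phi$; a nonempty symmetric $P$ contains the $W$-fixed point $0$, so $c \geq 0$, and after scaling we may take $c = 1$ (the degenerate case $c = 0$ gives $P = \{0\}$, generated by one vertex). Thus $P = \{x \in V : \langle a, x\rangle \leq 1 \text{ for all } a \in \Phi\}$ is exactly the polar dual of the root polytope $\mathrm{conv}(\Phi)$, and by Lemma~\ref{lem-generators-support} a generating set corresponds to a family of facets of $\mathrm{conv}(\Phi)$ whose union contains every root.

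Next I would bring in the Coxeter element. Let $\Gamma = \langle \omega \rangle$, of order $h$. By the decomposition $\Phi = \bigsqcup_{i=1}^{n} \Gamma \theta_i$ recalled above together with $nh = |\Phi|$, the roots split into exactly $n$ orbits, each of size $h$. Expanding $\theta_i = s_n \circ \cdots \circ s_{i+1}(b_i)$ in the simple root basis shows $\theta_i = b_i + \sum_{j > i} m_{ij} b_j$, so the transition matrix is unipotent and the $\theta_i$ form a basis of $V$; let $p$ be the unique point with $\langle \theta_i, p\rangle = 1$ for all $i$. Since $\omega$ is orthogonal and fixes the value $1$, the point $\omega^k p$ satisfies $\langle \omega^k \theta_i, \omega^k p\rangle = 1$, i.e.\ $\omega^k p$ lies on the support hyperplane of the root $\omega^k \theta_i$. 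As the roots $\{\omega^k \theta_i : 0 \leq k < h,\ 1 \leq i \leq n\}$ exhaust $\Phi$, the at most $h$ points $p, \omega p, \dots, \omega^{h-1} p$ touch every root hyperplane; because $\omega$ preserves $P$, once we know $p \in P$ the whole orbit lies in $P$ and, by Lemma~\ref{lem-generators-support}, forms a generating set. These points each meet at least $n$ support hyperplanes and $\omega$ carries vertices to vertices, so we obtain $h$ generating \emph{vertices}.

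The whole argument therefore reduces to a single geometric claim, which I expect to be the main obstacle: \emph{the point $p$ lies in $P$}, equivalently $\langle a, p\rangle \leq 1$ for all $a \in \Phi$, equivalently the $n$ independent roots $\theta_1, \dots, \theta_n$ lie on a common facet of $\mathrm{conv}(\Phi)$. I would attack this by showing $\max_{a \in \Phi} \langle a, p\rangle = 1$. A promising route uses that $\{\theta_1, \dots, \theta_n\}$ is precisely the inversion set of $\omega^{-1}$, i.e.\ the positive roots sent to negative roots by $\omega$; combined with the identity $\sum_{k=0}^{h-1} \omega^k = 0$ (the Coxeter element has no eigenvalue $1$), this forces the numbers $\langle \theta_i, \omega^k p\rangle$ to sum to $0$ along each orbit, and I would try to upgrade this to the extremality statement that the maximum over the orbit is attained at $k = 0$ with value $1$. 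Care is needed because the relevant facet need not be simplicial (for $D_4$ it is an octahedral facet of the $24$-cell carrying six roots, and the orbit of $p = e_3$ is $\{\pm e_1, \pm e_2, \pm e_3\}$). If a fully uniform argument proves elusive, I would fall back on writing the Coxeter element in coordinates separately for $A_n$, $D_n$, $E_6$, $E_7$, $E_8$ and checking the root inequalities for the explicit $p$ directly, as the model computations $p = (1, \dots, 1, 0)$ in type $A$ and the $D_4$ case already suggest.
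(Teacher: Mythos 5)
Your setup matches the paper's: transitivity of $W$ on a simply laced $\Phi$ reduces to a single polytope $P_\lambda$, the decomposition $\Phi = \bigsqcup_i \Gamma\theta_i$ with $nh = |\Phi|$ is the right tool, and the generating set is indeed the $\Gamma$-orbit of a single well-chosen vertex. The problem is that you have left the crux unproven and, more seriously, you have reduced to a claim that is \emph{false} in one of the cases the theorem covers. You assert that everything comes down to showing that the unique point $p$ with $\langle \theta_i, p\rangle = 1$ for all $i$ lies in $P$. For $E_8$ it does not: there is no face of $\mathrm{conv}(\Phi)$ containing all eight roots $\theta_1,\dots,\theta_8$, which is exactly why the paper treats $E_8$ separately. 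The correct, weaker condition you should reduce to is: there exists a vertex $x$ of $P$ whose $\Gamma$-orbit meets every $\Gamma$-orbit of support hyperplanes, i.e.\ for each $i$ there is some $k_i$ with $\langle \omega^{k_i}\theta_i, x\rangle = \lambda$ --- the $k_i$ need not all be $0$. The paper's $E_8$ witness is $x = \tfrac12(b_2+b_3)$, verified computationally to touch hyperplanes $\omega^{k}\theta_i$ for each $i$ at various powers $k$; its orbit gives the $30$ generators. Neither of your two suggested strategies for proving $p \in P$ closes the gap: the identity $\sum_{k=0}^{h-1}\omega^k = 0$ only tells you that $\langle \theta_i, \omega^k p\rangle$ sums to zero over each orbit, which is compatible with the maximum exceeding $1$, and the ``fall back on explicit coordinates'' option would simply reveal that $p \notin P$ in type $E_8$.

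For the remaining types $A_n$, $D_n$, $E_6$, $E_7$ your reduction is correct and the paper does prove $p \in P_\lambda$, but by a cleaner device than brute-force coordinates that you may want to adopt: order the simple roots so that $b_n$ is an end node of the Dynkin diagram at which the highest root has coefficient $1$ (possible precisely because $t \neq E_8$), and show $\theta_i = \sum_{b_\nu \in [b_i,b_n]} b_\nu$, so that each $\theta_i$ contains $b_n$ with coefficient exactly $1$. Then $p$ is the point with $\langle b_n, p\rangle = \lambda$ and $\langle b_i, p\rangle = 0$ for $i < n$, and since every positive root has $b_n$-coefficient $0$ or $1$, one gets $\langle a, p\rangle \in \{0,\lambda\}$ for all positive $a$, hence $p \in P_\lambda$; lying on the $n$ independent hyperplanes $\langle \theta_i, \cdot\rangle = \lambda$ makes it a vertex. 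In $E_8$ every coefficient of the highest root is at least $2$, so this argument, like yours, has no uniform extension, and some case-specific verification is unavoidable.
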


\begin{proof} We assume first that 
 $t$ is not $E_8$.
We choose a basis $b_1, \ldots, b_n$ of $\Phi$ and look at the corresponding Dynkin diagram.  We number the basis elements in the following way:
\begin{center}
\includegraphics[angle=0, width=0.6\textwidth]{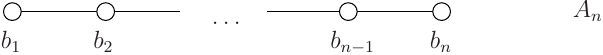}
\end{center}
\begin{center}
\includegraphics[angle=0,width=0.6\textwidth]{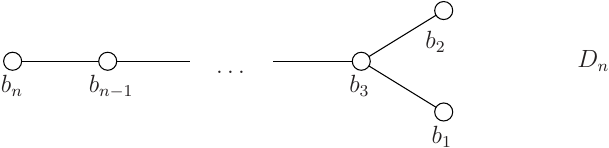}
\end{center}
\begin{center}
\includegraphics[angle=0,width=0.6\textwidth]{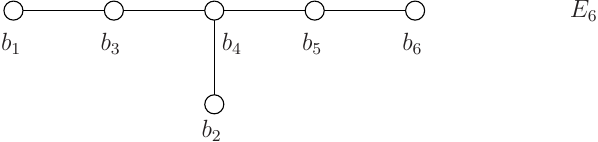}
\end{center}
\begin{center}
\includegraphics[angle=0,width=0.6\textwidth]{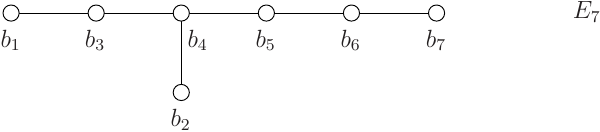}
\end{center}
Note that the numbering in the $D_n$ case is opposite to the numbering often found in the literature.
Recall that every vertex in the Dynkin diagram represents a basis element $b_i$ and that $\langle b_i,b_j\rangle$ is $-\langle b_j , b_j \rangle$ if the vertices for $b_i$ and $b_j$ are connected, and $0$, if they are not. Since the Dynkin diagram is a tree, for all $i$ and $j$ there exists a geodesic $[b_i, b_j]$ connecting the vertices $b_i$ and $b_j$. 
We first show the following claim. 

{\it Claim: Let $i\geq 1$, if $t$ is $A_n$, and $i \geq 2$, if $t$ is $D_n$, and  $i\geq 3$, if $t$ is $E_6$ or $E_7$. Then for all $j \geq i$ we have
\[s_j \circ \ldots \circ  s_{i+1}(b_i) = \sum_{\nu = i}^j b_\nu.\]}

Let us prove the claim by induction on $j$. 
If $j$ is equal to $i$ there is nothing to prove. Assume that the claim holds for some $j \geq i$ with $j <n$. Then we calculate
\begin{eqnarray*}s_{j+1} \circ s_j \circ \ldots s_{i+1}(b_i) 
&  = &s_{j+1}( \sum_{\nu = i}^j b_\nu) \\
& =  & \sum_{\nu = i}^{j-1} b_\nu + s_{j+1}(b_j) \\
& = & \sum_{\nu = i}^{j-1} b_\nu + b_{j} + b_{j+1} \\
& = & \sum_{\nu = i}^{j+1} b_\nu,
\end{eqnarray*}
which finishes the proof of the claim.

Recall that for all $i = 1, \ldots n$ we have
\[  \theta_i = s_n \circ \ldots \circ s_{i+1} (b_i).\]
From the claim we deduce that $\quad \theta_i = \sum_{\nu = i}^n b_\nu$, which implies
\[(\ast) \quad \theta_i = \sum_{b_\nu \in [b_i,b_n]} b_\nu\]
except possibly in the cases $t=D_n$ and $i=1$ or
$t = E_6$ or $E_7$ and $i =1,2$. 
It can easily be checked directly that the statement $(\ast)$ 
also holds in these remaining cases. 

Since in all cases $b_n$ is a neighbor of $b_{n-1}$ in the Dynkin diagram,
$(\ast)$ implies that 
\[\theta_i = b_n + \sum_{b_\nu \in [b_i,b_{n-1}]} b_\nu.\]

In a simply laced root system, the Weyl group acts transitively on the set
of roots, see \cite[VI, 1.3, Proposition 11]{bou}. Hence every 
symmetric alcoved polytope is of the form 
\[P_{\lambda} = \bigcap_{a \in \Phi} \{x : \langle a, x\rangle \leq \lambda\}\]
for some $\lambda> 0$.

Since $b_1, \ldots, b_n$ is a basis of $V$, there is a unique point $x \in V$ such that 
\[\langle b_n, x \rangle = \lambda, \langle b_{n-1}, x \rangle = \ldots = \langle b_2, x \rangle =\langle b_1, x \rangle = 0.\]
We will show now that $x$ is a vertex of $P_\lambda$. 

Note that the highest root in $\Phi$ is of the following form:
\[\begin{array}{ll}
A_n:& b_1 + \cdots + b_n\\
D_n : & b_1 + b_2 + 2 b_3 + \cdots + 2 b_{n-1} + b_n \\
E_6 : & b_1 + 2b_2 + 3 b_3 + 3 b_4 + 2 b_5 +  b_6 \\
E_7: & 2 b_1 + 2 b_2 + 3 b_3 + 4 b_4 + 3 b_5 + 2 b_6 + b_7
\end{array}
\]
Therefore for every positive root $a \in \Phi$ we find that $\langle a, x \rangle = 0$ or $\langle a,x \rangle = \langle b_n, x\rangle = \lambda$. For the negative roots there is nothing to do, since they assume a non-positive value on $x$. Hence we find that $x$ lies in $P_\lambda$. Since $\theta_1, \ldots, \theta_n$ are independent (see \cite{bou}, VI, 1.11, Proposition 33), and $x$ lies in all hyperplanes $\{\langle \theta_i, - \rangle  = \lambda\}$, we find that $x$ is a vertex. 

Since the Weyl group maps faces to faces, the elements $\omega^k x$ for $k = 0, \ldots, h-1$ are also vertices of $P_\lambda$, where $\omega = s_{b_1} \circ \ldots \circ s_{b_n}$ is the corresponding Coxeter element. Since $x$ satisfies $\langle \theta_i, x\rangle = \lambda$ for all $i = 1, \ldots, n$, the vertex $ y = \omega^k (x)$ satisfies 
$\langle \omega^{k} \theta_i, y \rangle = \lambda$ for all $i = 1, \dots, n$. Since every root is of the form  $\omega^{k} \theta_i$ 
for suitable $i$ and $k$, every hyperplane appearing at the boundary of  $P_\lambda$ contains one of the vertices $\omega^k x$ for $k = 0, \ldots, h-1$. This implies our claim for all types except possibly $E_8$.

We conclude the proof by discussing the type $E_8$. Let
$$B = \left(\begin{smallmatrix}1/2&
     1&
     {-1}&
     0&
     0&
     0&
     0&
     0\\
     {-1/2}&
     1&
     1&
     {-1}&
     0&
     0&
     0&
     0\\
     {-1/2}&
     0&
     0&
     1&
     {-1}&
     0&
     0&
     0\\
     {-1/2}&
     0&
     0&
     0&
     1&
     {-1}&
     0&
     0\\
     {-1/2}&
     0&
     0&
     0&
     0&
     1&
     {-1}&
     0\\
     {-1/2}&
     0&
     0&
     0&
     0&
     0&
     1&
     {-1}\\
     {-1/2}&
     0&
     0&
     0&
     0&
     0&
     0&
     1\\
     1/2&
     0&
     0&
     0&
     0&
     0&
     0&
     0\\
     \end{smallmatrix}\right), ~ 
\omega = \left(\begin{smallmatrix}0&
     1&
     0&
     0&
     0&
     0&
     0&
     {-1}\\
     0&
     0&
     1&
     0&
     0&
     0&
     0&
     {-1}\\
     1&
     1&
     0&
     0&
     0&
     0&
     0&
     {-1}\\
     0&
     1&
     1&
     0&
     0&
     0&
     0&
     {-1}\\
     0&
     0&
     0&
     1&
     0&
     0&
     0&
     {-1}\\
     0&
     0&
     0&
     0&
     1&
     0&
     0&
     {-1}\\
     0&
     0&
     0&
     0&
     0&
     1&
     0&
     {-1}\\
     0&
     0&
     0&
     0&
     0&
     0&
     1&
     {-1}\\
     \end{smallmatrix}\right),  \Theta = \left(\begin{smallmatrix}1&
      0&
      0&
      0&
      0&
      0&
      0&
      0\\
      0&
      1&
      0&
      0&
      0&
      0&
      0&
      0\\
      1&
      0&
      1&
      0&
      0&
      0&
      0&
      0\\
      1&
      1&
      1&
      1&
      0&
      0&
      0&
      0\\
      1&
      1&
      1&
      1&
      1&
      0&
      0&
      0\\
      1&
      1&
      1&
      1&
      1&
      1&
      0&
      0\\
      1&
      1&
      1&
      1&
      1&
      1&
      1&
      0\\
      1&
      1&
      1&
      1&
      1&
      1&
      1&
      1\\
      \end{smallmatrix}\right).$$
Then the columns $b_1, b_2, \dots, b_8$ of the matrix $B$ are a basis for a root system of type $E_8$. It has the following Dynkin diagram

\begin{center}
\includegraphics[angle=0,width=0.6\textwidth]{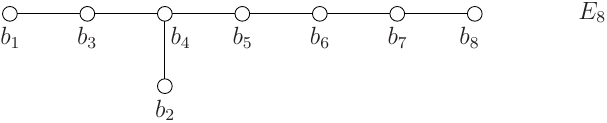}
\end{center}

The matrix $\omega$ represents the Coxeter element associated to $b_1, \ldots, b_8$, and the columns of $\Theta$ are the roots $\theta_i = s_n \circ \ldots \circ s_{i+1}(b_i)$ expressed in the basis $b_1, \ldots, b_8$.  Let $x = \frac{1}{2}(b_2+b_3)$ in $V$.  We verify by explicit calculation that the point $x$ lies on the root hyperplanes corresponding to $\omega^{17}\theta_1$,
$\omega^{20}\theta_1$,
$\omega^{16}\theta_2$,
$\omega^{18}\theta_2$,
$\omega^{17}\theta_3$,
$\omega^{15}\theta_4$,
$\omega^{20}\theta_4$,
$\omega^{9}\theta_5$,
$\omega^{14}\theta_5$,
$\omega^{13}\theta_6$,
$\omega^{12}\theta_7$,
$\omega^{11}\theta_8$,
$\omega^{20}\theta_8$, and
$\omega^{25}\theta_8$.
Hence $x$ touches all orbits of the root hyperplanes under the Coxeter element $\omega$, so the orbit of $x$ forms a $30$-element generating set for the symmetric alcoved polytope $P_1$ of $E_8$.  Modulo scaling, there is only one symmetric alcoved polytope  of type $E_8$, which has $f$-vector $(19440, 207360, 483840, 483840, 241920, 60480, 6720, 240, 1)$, computed using polymake.
\end{proof}

\subsection{Types $B$ and $C$}

  The $C_n$ roots can be obtained by rescaling some of the $B_n$ roots, so the $B_n$ and $C_n$ alcoved polytopes have the same facet directions.  Moreover $B_n$ and $C_n$ have the same Weyl group. Therefore the symmetric alcoved polytopes of type $C_n$ coincide with the symmetric alcoved polytopes of type $B_n$, and it suffices to consider the $B_n$ case.

Here we may assume $V = \R^n$ and $\Phi = \{\pm a_i: i = 1, \ldots n\} \cup \{\pm a_i \pm a_j: 1 \leq i < j \leq n\}$, where $a_i$ is  the canonical basis of $V$.  The Coxeter number for types $B_n$ and $C_n$ is $2n$.

\begin{thm}
\label{theorem-bc}
Let $P$ be a symmetric alcoved polytope of type $B_n$ or $C_n$ for $n \geq 2$. Then $P$ can be generated by $2n$ vertices.
\end{thm}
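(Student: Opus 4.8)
The plan is to imitate the strategy of the simply-laced case as closely as possible, but with the essential modification that the Weyl group of $B_n$ no longer acts transitively on the roots: there are two root lengths, so a symmetric alcoved polytope now depends on \emph{two} parameters, say $\langle a, x\rangle \le \lambda$ for the short roots $\pm a_i$ and $\langle a, x\rangle \le \mu$ for the long roots $\pm a_i \pm a_j$. Thus I expect the general symmetric polytope to have the form
\[
P_{\lambda,\mu} = \bigcap_{i}\{x : |x_i| \le \lambda\} \;\cap\; \bigcap_{i<j}\{x : |x_i \pm x_j| \le \mu\},
\]
and the theorem must be proved uniformly in $\lambda,\mu > 0$. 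First I would fix the standard base $b_1,\dots,b_n$ of $B_n$ (simple roots $a_1-a_2,\dots,a_{n-1}-a_n$ together with the short simple root $a_n$), compute the associated Coxeter element $\omega = s_1\circ\cdots\circ s_n$, and record the roots $\theta_i = s_n\circ\cdots\circ s_{i+1}(b_i)$ from Proposition~\cite[VI, 1.11, Proposition 33]{bou}, so that every root is of the form $\omega^k\theta_i$.

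The core of the argument is to produce a single vertex $x$ of $P_{\lambda,\mu}$ whose $\Gamma$-orbit $\{\omega^k x : k=0,\dots,2n-1\}$ meets every support hyperplane, and then invoke Lemma~\ref{lem-generators-support}. Since $|\Phi| = 2n^2 = n\cdot h$ and $h = 2n$, one orbit has exactly $2n$ points, so by the orbit decomposition it suffices to find one $x$ lying on \emph{one} support hyperplane from each of the $n$ orbits $\Gamma\theta_1,\dots,\Gamma\theta_n$ — exactly the phenomenon exhibited explicitly for $E_8$. Concretely I would guess the candidate vertex to be the analogue of the simply-laced one, namely the point determined by $\langle b_n, x\rangle = $ (the relevant bound) and $\langle b_i,x\rangle = 0$ for $i < n$; in coordinates this should be a point like $x = (\tfrac{\mu}{2} \text{ or } \lambda, \dots)$ with equal entries, and I would verify it lies in $P_{\lambda,\mu}$ by checking the highest short and highest long roots. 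The comparison of the two bounds $\lambda$ and $\mu$ at this vertex is where the $B_n$ case genuinely differs from the simply-laced one.

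The hard part, and the step I expect to be the real obstacle, is that with two free parameters $\lambda,\mu$ the geometry of $P_{\lambda,\mu}$ changes qualitatively as $\lambda/\mu$ varies: for some ratios the short-root facets dominate, for others the long-root facets dominate, and some facets may degenerate. A single algebraic point $x$ supporting one hyperplane per $\Gamma$-orbit might cease to be a vertex, or might fall outside $P$, in certain parameter ranges. I would therefore split into cases according to the relative size of $\lambda$ and $\mu$ (for instance $\mu \le 2\lambda$ versus $\mu \ge 2\lambda$), and in each regime exhibit an explicit vertex and check, root-orbit by root-orbit, that its $\omega$-orbit hits every orbit of hyperplanes. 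The verification that the chosen $x$ is a genuine \emph{vertex} (not merely a boundary point) — that the $n$ hyperplanes it sits on are linearly independent — must also be confirmed in each regime, most safely by checking that the $\theta_i$ involved span $V$, which follows from the independence of $\theta_1,\dots,\theta_n$ recorded above.
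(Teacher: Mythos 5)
Your overall architecture is exactly the paper's: compute $\theta_i=s_n\circ\cdots\circ s_{i+1}(b_i)$ for the standard base of $B_n$, find a single point $x$ lying on one support hyperplane from each orbit $\Gamma\theta_1,\dots,\Gamma\theta_n$, verify it is a vertex via independence of the $\theta_i$, and let its Coxeter orbit of size $h=2n$ meet every root hyperplane. However, the one concrete ingredient you supply --- the candidate vertex --- is wrong, and the step you flag as ``the real obstacle'' is precisely the step you have not done. The point determined by $\langle b_i,x\rangle=0$ for $i<n$ and $\langle b_n,x\rangle=c$ is the point with all coordinates equal to $c$; since $\theta_n=a_n$ and $\theta_i=a_i+a_n$ for $i<n$, this point satisfies $\langle\theta_n,x\rangle=c$ and $\langle\theta_i,x\rangle=2c$, so it can lie on both a short-root and a long-root support hyperplane only in the degenerate case where the long-root bound is exactly twice the short-root bound. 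For a generic symmetric $B_n$ polytope its $\Gamma$-orbit misses all the long-root facets entirely, so the argument collapses.

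The fix requires two ideas you do not have. First, a normalization: writing the polytope as $\bigcap_i\{|\langle a_i,x\rangle|\le\mu\}\cap\bigcap_{i<j}\{|\langle a_i\pm a_j,x\rangle|\le\lambda\}$, one may always assume $\mu\le\lambda\le 2\mu$ (if $\lambda>2\mu$ the long-root constraints are implied by the triangle inequality, and if $\lambda<\mu$ the short-root constraints are implied by averaging the two long-root constraints $|x_i\pm x_j|\le\lambda$), at the cost of some hyperplanes supporting lower-dimensional faces rather than facets --- which is harmless for Lemma~\ref{lem-generators-support}. Second, the correct vertex, which is \emph{not} the naive analogue of the simply-laced one: take $x$ with $\langle a_1,x\rangle=\cdots=\langle a_{n-1},x\rangle=\lambda-\mu$ and $\langle a_n,x\rangle=\mu$. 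Then $\langle\theta_n,x\rangle=\mu$ and $\langle\theta_i,x\rangle=\lambda$ for $i<n$, and membership in the polytope follows from $0\le\lambda-\mu\le\mu$ and $2(\lambda-\mu)\le\lambda$, both consequences of the normalization. With this single point the argument goes through uniformly; the case split on the ratio $\lambda/\mu$ that you anticipate is unnecessary.
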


\begin{proof}

We choose as a basis $b_1 = a_1 - a_2, b_2 = a_2 - a_3, \ldots b_{n-1} = a_{n-1} - a_n$ and $b_n = a_n$ of $\Phi$. 
The Dynkin diagram of type $B_n$ is the following:
\begin{center}
\includegraphics[angle=0,width=0.6\textwidth]{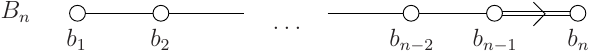}
\end{center}

We then calculate the  roots $\theta_1, \ldots, \theta_n$ as
\[\theta_n = a_n \mbox{ and }\theta_i = a_n + a_i \mbox{ for }i=1, \ldots, n-1.\]
Recall that the union of the orbits of the $\theta_i$ under the group $\Gamma$ generated by the Coxeter element $ \omega = s_1 \circ \ldots \circ s_n$ is the whole root system. Since Weyl group elements map long roots to long roots and short roots to short roots, we deduce that the set of short roots is equal to $\Gamma b_n$, and  that the set of long roots is equal to $\Gamma b_1 \cup \ldots \cup \Gamma b_{n-1}$. 







A type $B_n$ or $C_n$ symmetric alcoved polytope $P$ is of the form
\[P_{\lambda,\mu}= \bigcap_i \{x : \langle \pm a_i, x \rangle \leq \mu \} \cap \bigcap_{i<j} \{x: \langle (\pm a_i \pm a_j), x \rangle \leq \lambda\} \]
for positive numbers $\lambda$ and $\mu$ satisfying $\mu \leq \lambda \leq 2 \mu $. In the extreme cases $\mu = \lambda$ and $\lambda = 2 \mu$, only long or short root hyperplanes appear as facets, and the others support lower dimensional faces.  However, this would not affect our reasoning.
Let $x$ be the point in $V$ defined by 
\[\langle a_1, x \rangle  = \ldots = \langle a_{n-1}, x \rangle = \lambda - \mu \mbox{ and }\langle a_n, x \rangle = \mu.\]
Then $x$ is a point in $P_{\lambda, \mu}$ lying in the $n$ independent hyperplanes $\{\langle \theta_n, - \rangle  = \mu\}$ and $\{\langle \theta_i , - \rangle = \lambda\}$ for $i = 1, \ldots, n-1$. Therefore  $x$ is a vertex. 
As in the proof of Theorem~\ref{theorem-simplylaced}, the $h$ vertices $\omega^k x$ for $k = 0, \ldots, h-1$ touch every boundary hyperplane. Hence they form a generating set of cardinality $h$.
\end{proof}

\subsection{Type $F_4$}

Let $V = \R^4$ with canonical basis $a_1, \ldots, a_4$.  Then 
\[\Phi = \{\pm a_i: i=1, \ldots, 4\} \cup \{\pm a_i \pm a_j: 1 \leq i < j \leq 4\} \cup \{\frac{1}{2} (\pm a_1 \pm a_2 \pm a_3 \pm a_4)\}\]
is a root system of type $F_4$ in $V$. It consists of $24$ long roots (of the form $\pm a_i \pm a_j$) and $24$ short roots.

A symmetric alcoved polytope of type $F_4$ is of the form
\[P_{\lambda,\mu}= \bigcap_i \{ \langle \pm a_i, x \rangle \leq \mu \}
\cap \{ \langle(\pm a_1 \pm a_2 \pm a_3 \pm a_4), x \rangle \leq 2 \mu\} \cap \bigcap_{i<j} \{\langle (\pm a_i \pm a_j), x \rangle \leq \lambda\} \]
for positive numbers $\lambda$ and $\mu$ satisfying $\mu \leq \lambda \leq 2 \mu $. This condition ensures that all root hyperplanes are support hyperplanes. When $\mu = \lambda$ or $\lambda = 2 \mu$, either only long roots or only short roots appear as facet normals and the others support faces of lower dimension.  When $\mu < \lambda < 2 \mu $, all root hyperplanes support facets.

The convex hull of short roots of $F_4$ is a regular polytope called the {\em $24$-cell}.  The long roots are the vertices of its dual.  The $24$-cell is combinatorially equivalent to $P_{1,1}$.  The root system of type $F_4$ is self-dual. By doubling the lengths of the short roots, and then rescaling everything by $\frac{1}{\sqrt{2}}$, we get back $F_4$, rotated, with the roles of long and short roots switched.  More concretely, the matrix $A = \frac{1}{2}
\left(\begin{smallmatrix}
1 & 1 & 0 & 0 \\
1 & -1 & 0 & 0 \\
0 & 0 & 1 & 1 \\
0 & 0 & 1 & -1 
\end{smallmatrix}\right)
$
 maps the long roots to short roots and the doubled short roots to long roots. Then $A^{-T}$ is a linear isomorphism from $P_{\lambda, \mu}$ to $P_{2 \mu, \lambda}$, so these two polytopes have exactly the same combinatorics, in particular, the same number of generators.

\begin{thm}
\label{theorem-f4}
Let $P_{\lambda, \mu}$ be an alcoved polytope of type $F_4$, using the notation above.  
\begin{enumerate}
\item If $\frac{4}{3} \mu \leq \lambda \leq \frac{3}{2} \mu$, then $P_{\lambda, \mu}$ is generated by $12$ elements.  
\item If $\mu \leq \lambda \leq \frac{4}{3} \mu$ or $\frac{3}{2} \mu \leq \lambda \leq 2 \mu$, then $P_{\lambda, \mu}$ is generated by $24$ elements.  
\end{enumerate}
In each of these cases, if the inequalities among the $\lambda$ and $\mu$ are strict, then the numbers of generators, 12 and 24 respectively, are minimum possible.
\end{thm}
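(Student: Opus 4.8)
\emph{Framework.} By Lemma~\ref{lem-generators-support}, a set $S\subseteq P_{\lambda,\mu}$ is a generating set precisely when, for every root $a$, the supporting hyperplane of $P_{\lambda,\mu}$ in direction $a$ meets $S$. The plan is to exploit the Coxeter symmetry: if a vertex $v$ lies on the supporting hyperplane of a root $r$, then since $P_{\lambda,\mu}$ is $W$-invariant the whole orbit $\{\omega^k v\}$ meets the supporting hyperplanes of all roots in $\Gamma r$. Because $\Phi$ is the disjoint union of the four orbits $\Gamma\theta_1,\dots,\Gamma\theta_4$, of which two consist of long and two of short roots, a generating set can be assembled from Coxeter orbits of a few chosen vertices, one orbit meeting each $\Gamma\theta_i$. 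I would first record the self-duality $A^{-T}\colon P_{\lambda,\mu}\to P_{2\mu,\lambda}$, which interchanges long and short roots and carries the range $\mu\le\lambda\le\frac43\mu$ onto $\frac32\mu\le\lambda\le 2\mu$ while fixing $\frac43\mu\le\lambda\le\frac32\mu$; this lets me treat only $\frac43\mu\le\lambda\le\frac32\mu$ (case~1) and $\mu\le\lambda\le\frac43\mu$ (the lower half of case~2).

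\emph{Upper bounds.} For case~1, choose simple roots so that $\theta_1,\theta_2$ are long and $\theta_3,\theta_4$ short; the unique point $x$ with $\langle\theta_1,x\rangle=\langle\theta_2,x\rangle=\lambda$ and $\langle\theta_3,x\rangle=\langle\theta_4,x\rangle=\mu$ comes out as $x=(2\lambda-2\mu,\lambda-2\mu,\lambda-2\mu,0)$. I would check directly that $x\in P_{\lambda,\mu}$ exactly when $\frac43\mu\le\lambda\le\frac32\mu$: the only inequalities that can fail are $|x_1|\le\mu$, tight at $\lambda=\frac32\mu$, and the long inequality $|x_2+x_3|\le\lambda$, tight at $\lambda=\frac43\mu$. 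As the intersection of four independent supporting hyperplanes $x$ is a vertex, and meeting one root from each orbit, its Coxeter orbit (of size dividing $h=12$) is a generating set. In case~2 the point $x$ leaves $P_{\lambda,\mu}$, so I would instead use $v=(\mu,\mu-\lambda,\mu-\lambda,\lambda-\mu)$, the vertex cut out by the short facet $a_1$ and the long facets $a_1-a_2,\ a_1-a_3,\ a_1+a_4$; it is feasible precisely for $\mu\le\lambda\le\frac43\mu$. Since $a_1-a_3=\theta_1$ and $a_1-a_2=\theta_2$ lie in the two long orbits and $a_1$ lies in one short orbit, $\Gamma v$ already meets three of the four orbits; adjoining the Coxeter orbit of any vertex supported on a root of the remaining short orbit (such a vertex exists since every root hyperplane is a face) gives a generating set of size at most $24$. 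The range $\frac32\mu\le\lambda\le 2\mu$ then follows by the duality.

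\emph{Lower bounds.} For case~1 with strict inequalities all $48$ root hyperplanes are facets and, as I would verify (via the determinantal genericity of Remark~\ref{rem-lowerbound} or an explicit $f$-vector computation), $P_{\lambda,\mu}$ is simple; hence each vertex lies on exactly four facets and at least $48/4=12$ vertices are needed. The heart of the theorem, and the step I expect to be the main obstacle, is the lower bound $24$ in case~2, since here the fractional/LP bound is only $12$ and the true minimum exceeds it. I would prove it by showing that for $\mu<\lambda<\frac43\mu$ \emph{no point of $P_{\lambda,\mu}$ lies on two short-root supporting hyperplanes simultaneously}, so that hitting the $24$ short roots already forces $24$ points. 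There are three cases. Two coordinate short roots cannot both be tight, since $x_i=\pm\mu$ and $x_j=\pm\mu$ would give $|x_i\pm x_j|=2\mu>\lambda$. Two half-integer short roots cannot both be tight: if they differ in two, three, or four signs one is led to an impossible equality, while if they differ in a single sign they force, after adjusting signs, a relation $x_i+x_j+x_k=2\mu$ with the fourth coordinate $0$, whence $3\lambda\ge(x_i+x_j)+(x_i+x_k)+(x_j+x_k)=4\mu$, contradicting $\lambda<\frac43\mu$. Finally a coordinate and a half-integer short root are incompatible: if $x_i=\pm\mu$ the long inequalities give $|x_j|\le\lambda-\mu<\frac13\mu$ for $j\ne i$, so every half-integer expression has absolute value $<2\mu$ and cannot be tight. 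This is exactly where the threshold $\frac43$ enters. The case $\frac32\mu<\lambda<2\mu$ then follows from the duality, with the $24$ long roots pairwise incompatible, and the minimality assertions follow.
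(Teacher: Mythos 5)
Your proposal is correct, and for Case~(1) and both upper bounds it essentially coincides with the paper's proof: the same vertex $x=(2\lambda-2\mu,\lambda-2\mu,\lambda-2\mu,0)$ with $\langle\theta_1,x\rangle=\langle\theta_2,x\rangle=\lambda$ and $\langle\theta_3,x\rangle=\langle\theta_4,x\rangle=\mu$ drives Case~(1), and in Case~(2) you cover the four orbits $\Gamma\theta_i$ by two Coxeter orbits of vertices, exactly as the paper does with its pair $x_1,x_2$ --- you merely work in the range $\mu\le\lambda\le\frac{4}{3}\mu$ with the vertex $(\mu,\mu-\lambda,\mu-\lambda,\lambda-\mu)$ while the paper works in the dual range $\frac{3}{2}\mu\le\lambda\le 2\mu$. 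The genuine difference is the lower bound of $24$ in Case~(2): the paper gets it from a Gfan/Polymake computation showing the polytope is simple with $192$ vertices, each on exactly one long-root facet, whereas you give a computation-free argument that for $\mu<\lambda<\frac{4}{3}\mu$ no point of $P_{\lambda,\mu}$ is tight for two short roots at once. I checked your three cases (two coordinate roots, two half-sum roots, one of each) and they are all correct; each reduces cleanly to $|x_i|\le\mu$, $|x_i\pm x_j|\le\lambda$ and $3\lambda<4\mu$, with the critical step being the sum of three long-root inequalities giving $4\mu\le 3\lambda$ when two half-sum roots differ in one sign. This is an improvement: it replaces a machine verification by a transparent inequality argument, and it applies to arbitrary points of $P_{\lambda,\mu}$ rather than only vertices. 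One caveat: your suggestion that simplicity in Case~(1) might follow from the genericity in Remark~\ref{rem-lowerbound} does not work, since $(\lambda,\mu)$ ranges over a fixed two-parameter family rather than generic coefficients, so the explicit verification of simplicity is still required there --- which is precisely what the paper does by computation, so this is not a gap relative to the paper, only a hedge that should be dropped.
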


\begin{proof} 

The Dynkin diagram of a root system of type $F_4$ is the following:
\begin{center}
\includegraphics[angle=0,width=0.6\textwidth]{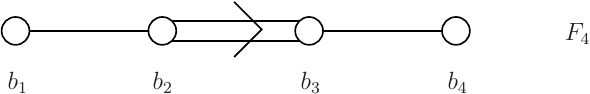}
\end{center}
with the basis $b_1 = a_2 - a_3, b_2 = a_3-a_4, b_3 = a_4$ and $b_4 = \frac{1}{2} (a_1-a_2-a_3-a_4)$ of $\Phi$. We calculate the roots $\theta_1, \ldots, \theta_4$ associated to the basis $\{b_1, b_2, b_3, b_4\}$:
\begin{eqnarray*}
\theta_1 & = &  a_1 - a_3\\
\theta_2  & = & a_1 - a_2 \\
\theta_3  & = & \frac{1}{2} (a_1 -  a_2 - a_3 + a_4) \\
\theta_4 & = & \frac{1}{2} (a_1 - a_2 - a_3 - a_4)
\end{eqnarray*}







\noindent {\bf Case (1)}: The alcoved polytope is of the form $P_{\lambda, \mu}$ with 
$\frac{4}{3} \mu  \leq \lambda \leq \frac{3}{2} \mu$.
Let $x$ be the element of $V$ given by 
\[ \langle a_1, x \rangle = 2 \lambda - 2 \mu,
\langle a_2, x \rangle =  \lambda - 2\mu, \langle a_3, x \rangle = \lambda - 2 \mu,  \langle a_4, x \rangle =  0.\]
We check that $x$ is a point in $P_{\lambda,\mu}$ as follows.
Using $\mu < \lambda$ and $2 \lambda \leq 3 \mu$, we find \[- \mu \leq 2 \lambda- 2 \mu \leq \mu \mbox{ and } -\mu \leq \lambda - 2 \mu \leq \mu.\]
 Hence $x$ lies in $\bigcap_i \{ \langle \pm a_i, x \rangle \leq \mu \}$.   Moreover, we have 
\[\pm (2 \lambda - 2 \mu) \pm (\lambda - 2 \mu) \pm (\lambda - 2 \mu) \leq 2 \lambda - 2 \mu + 2 ( 2 \mu - \lambda) = 2 \mu,\]
so $x$ lies in $\{ \langle(\pm a_1 \pm a_2 \pm a_3 \pm a_4), x \rangle \leq 2 \mu\}$.  Checking the long roots we find
\[ \pm (2 \lambda - 2 \mu) \pm (\lambda - 2 \mu) \leq 2 \lambda - 2 \mu + 2 \mu - \lambda = \lambda \textup{, and}\]
\[ - \lambda \leq 2\lambda - 2 \mu \leq \lambda\mbox{ and }- \lambda \leq \lambda - 2 \mu \leq \lambda.\]
It remains to check 
\[ \pm (\lambda - 2 \mu) \pm (\lambda - 2 \mu)  \leq 4 \mu - 2\lambda \leq \lambda,\]
which follows from the condition $4 \mu \leq 3 \lambda$.


Then we calculate $\langle \theta_1, x \rangle = \lambda, \langle \theta_2,x \rangle = \lambda, \langle \theta_3, x \rangle = \mu , \langle \theta_4, x \rangle = \mu $.
Hence $x$ lies in four linearly independent facets of $P_{\lambda, \mu}$, so it is a vertex. By the same argument as in the proof of Theorem~\ref{theorem-simplylaced}, the vertices $\omega^k x$ for $k= 0, \ldots, h-1$ generate $P_{\lambda,\mu}$. 

A computation using Gfan \cite{gfan} shows that all values of $\lambda$ and $\mu$ satisfying $\frac{4}{3} \mu  < \lambda < \frac{3}{2} \mu$ give the same combinatorial type for $P_{\lambda, \mu}$.  A polymake computation shows that the polytope is simple in this case, i.e.\ each vertex lies in exactly $4$ facets.  Hence at least $12$ vertices are necessary to meet all $48$ facets.  The polytope has $f$-vector $(288, 576, 336, 48,1)$.

\medskip

\noindent {\bf Case (2)}:  The remaining cases are $\mu \leq \lambda < \frac{4}{3} \mu$ and $\frac{3}{2} \mu < \lambda \leq 2 \mu$.  However, these two cases are equivalent via the isomorphism of $P_{\lambda, \mu}$ and $P_{2 \mu, \lambda}$ seen above.  

Let us consider the case $\frac{3}{2} \mu < \lambda \leq 2 \mu$.  The point $x_1 = (\mu, \mu - \lambda, \lambda - 2\mu, 0)$ lies on the facets perpendicular to $\theta_2$, $\theta_3$, and $\theta_4$, while the point $x_2 = (\mu,  \lambda - 2\mu, \mu - \lambda, 0)$ lies on the facets perpendicular to $\theta_1$, $\theta_3$, and $\theta_4$.  They also lie in the hyperplane defined by $\langle a_1, x \rangle = \mu$, so they are vertices of the polytope.  Since $x_1, x_2 \in P_{\lambda, \mu}$ and together they touch all  hyperplanes perpendicular to $\theta_i$'s, the union of the orbits of $x_1$ and $x_2$ under the group $\Gamma$ forms a generating set of size at most $24$.  

Computation by Gfan shows that all values of $\lambda$ and $\mu$ satisfying $\frac{3}{2} \mu < \lambda < 2 \mu$ give the same the combinatorial type for $P_{\lambda, \mu}$, which is simple and has $f$-vector $(192, 384, 240, 48, 1)$.  The $192$ vertices of $P_{\lambda, \mu}$ are obtained from $x_1$ (or equivalently $x_2$) by permuting coordinates in any of the $24$ ways and changing signs in any of the $8$ ways, for a total of $192$ distinct combinations.  It can then be seen directly that each vertex lies in exactly one long-root facet and there are $24$ long-root facets, so $P_{\lambda,\mu}$ needs at least $24$ generators.
\end{proof}

\small

\end{document}